\documentclass{amsart}
\usepackage[margin=1.5cm]{geometry}
\numberwithin{equation}{section}
\usepackage{amssymb}
\usepackage{amsmath, amsfonts,amsthm,amssymb,amscd, hyperref, verbatim,graphicx,color,multirow,booktabs, caption,tikz,tikz-cd, mathdots,bm}
\usepackage{tikz-cd}
\usetikzlibrary{positioning}
\newtheorem{theorem}{Theorem}
\newtheorem{lemma}{Lemma}[section]

\newtheorem{remark}[lemma]{Remark}

\newcommand{\Sym}{\mathop{\mathrm{Sym}}}
\def\nor#1#2{{\bf N}_{{#1}}({{#2}})}
\hypersetup{
colorlinks=false,
allbordercolors=red,
citebordercolor=green
}

\begin{document}
\title[Abelian quotients of transitive groups]{A subexponential bound on the cardinality of abelian quotients\\in finite transitive groups}
\author[A. Lucchini]{Andrea Lucchini}
\address{Andrea Lucchini, Dipartimento di Matematica Pura e Applicata,\newline
 University of Padova, Via Trieste 53, 35121 Padova, Italy} 
\email{lucchini@math.unipd.it}
         
\author[L. Sabatini]{Luca Sabatini}
\address{Luca Sabatini, Dipartimento di Matematica  e Informatica ``Ulisse Dini'',\newline
 University of Firenze, Viale Morgagni 67/a, 50134 Firenze, Italy} 
\email{luca.sabatini@unifi.it}
\author[P. Spiga]{Pablo Spiga}
\address{Pablo Spiga, Dipartimento di Matematica Pura e Applicata,\newline
 University of Milano-Bicocca, Via Cozzi 55, 20126 Milano, Italy} 
\email{pablo.spiga@unimib.it}
\subjclass[2010]{primary 20B30, 20B35}
\keywords{permutation groups; symmetric group; abelian quotients}        
	
\maketitle

        \begin{abstract}
        We show that, for every transitive permutation group $G$ of degree $n\ge 2$,
        the largest abelian quotient of $G$ has cardinality at most $4^{n/\sqrt{\log_2 n}}$.
        This gives a positive answer to a 1989 outstanding question of L\'aszl\'o Kov\'acs and Cheryl Praeger. 
       \end{abstract}
          
\section{Introduction}

L\'aszl\'o Kov\'acs and Cheryl Praeger~\cite{KP} have investigated large abelian quotients in arbitrary permutation groups of finite degree.
Their work was motivated by recent (at that time) investigations on minimal permutation representations of a finite group~\cite{EP}.
One of the main results in~\cite{KP} (which is independently proved in \cite{AG}) shows that,
for every permutation group of degree $n$, the largest abelian quotient has order at most $3^{n/3}$. Clearly, this bound is attained, whenever $n$ is a multiple of $3$,
by an elementary abelian $3$-group of order $3^{n/3}$ having all of its orbits of cardinality $3$.
Furthermore, the authors conjecture that, for \textit{transitive} groups of degree $n$, a subexponential bound in $n(\log_2 n)^{-1/2}$ holds.
More history on this conjecture and more details can be found in the survey paper~\cite{PS}.

The first substantial evidence towards the conjecture goes back to the work of Aschbacher and Guralnick~\cite{AG};
they proved the striking result that the largest abelian quotient of a \textit{primitive} group of degree $n$ has order at most $n$.
In the concluding remarks, the authors also independently ask whether
one can obtain a subexponetial bound on the order of abelian quotients of transitive groups in terms of their degrees.
We refer to~\cite{AG,PS} for an infinite family of transitive groups $G$ of degree $n$ with $|G/G'|$ asymptotic to $\exp(bn/\sqrt{\log_2 n})$,
for some constant $b$.

The second substantial evidence towards the conjecture is in~\cite{GMP},
where many of the results in Section~7 get very close to the desired upper bound.
In particular, Theorem 7.6 in \cite{GMP} says that if $G$ is a transitive permutation group of degree $n \ge 2$ and 
 $N \lhd G$ is a still transitive normal subgroup of $G,$ then
the product of the orders of the abelian composition factors of $G/N$ is at most $4^{n/\sqrt{\log_2 n}}$.

In this paper, we settle in the affirmative the conjecture of Kov\'acs and Praeger.
\begin{theorem}\label{thrm:main}For every positive integer $n\ge 2$ and for every transitive permutation group $G$ of degree $n$, we have
$$ |G/G'| \> \le \> 4^{n/\sqrt{\log_2 n}} . $$
\end{theorem}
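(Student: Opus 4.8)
The plan is to induct on the degree $n$, reducing a general transitive group to a primitive action on a block system and to a transitive action on a single block. If $G$ is primitive, the theorem of Aschbacher and Guralnick already gives $|G/G'|\le n$, and since $(\log_2 n)^{3/2}\le 2n$ for all $n\ge 2$ this yields $n\le 4^{n/\sqrt{\log_2 n}}$, settling the base case. So I would assume $G$ imprimitive and fix a \emph{maximal} block system $\mathcal B=\{B_1,\dots,B_c\}$ with blocks of size $b$, so that $n=bc$ with $b,c\ge 2$ and the action on $\mathcal B$ is primitive. Write $K$ for the kernel of the action of $G$ on $\mathcal B$, put $\bar G=G/K$ (a primitive group of degree $c$), let $H=G_{\{B_1\}}$ be the setwise stabiliser of $B_1$, and let $H^{B_1}\le\Sym(B_1)$ be the transitive constituent it induces on $B_1$, a transitive group of degree $b$.

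Since $\bar G$ is a quotient of $G$ carrying $G'$ onto $\bar G'$, the surjection $G/G'\twoheadrightarrow\bar G/\bar G'$ has kernel $KG'/G'$, whence
\[
|G/G'| \;=\; |\bar G/\bar G'|\cdot|KG'/G'| \;\le\; c\cdot|KG'/G'|,
\]
the inequality being Aschbacher--Guralnick applied to the primitive group $\bar G$ of degree $c$. Everything then rests on the central lemma I would isolate, namely that
\[
|KG'/G'| \;\le\; \bigl|H^{B_1}/(H^{B_1})'\bigr| .
\]
Granting this, the inductive hypothesis applied to the transitive group $H^{B_1}$ of degree $b<n$ gives $|KG'/G'|\le 4^{b/\sqrt{\log_2 b}}$, so $|G/G'|\le c\cdot 4^{b/\sqrt{\log_2 b}}$, and it remains only to check the elementary numerical inequality $c\cdot 4^{b/\sqrt{\log_2 b}}\le 4^{bc/\sqrt{\log_2(bc)}}$ for all integers $b,c\ge 2$ (comparing exponents, with the finitely many small cases verified directly). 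I want to stress that the \emph{linear} primitive bound $|\bar G/\bar G'|\le c$ is indispensable here: replacing it by the inductive bound $4^{c/\sqrt{\log_2 c}}$ on the top quotient already fails the numerics at $b=c=2$, which is exactly why the constant must be as large as $4$ and why the Aschbacher--Guralnick input is used only on the action on blocks.

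To prove the central lemma I would realise $KG'/G'$ through coinvariants. Conjugation makes $A:=K/K'$ a module for $\bar G=G/K$, and $KG'/G'$ is a quotient of the coinvariant group $A_{\bar G}=A/\langle a-a^{\bar g}:a\in A,\ \bar g\in\bar G\rangle$. Because $\bar G$ permutes the $c$ blocks transitively with stabiliser $\bar H=H/K$, and the restriction maps $\rho_i\colon K\to K^{B_i}$ are interchanged by this action, one is led to compare $A$ with $\bigoplus_{i=1}^c(K^{B_i})^{\mathrm{ab}}$, which is the $\bar G$-module induced from the $\bar H$-module $(K^{B_1})^{\mathrm{ab}}$; the coinvariants of an induced module collapse to the coinvariants over the point stabiliser, namely $\bigl((K^{B_1})^{\mathrm{ab}}\bigr)_{\bar H}$. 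Since $K^{B_1}$ is normal in $H^{B_1}$ and $\bar H$ acts on it through $H^{B_1}$, this last group is precisely the image of $K^{B_1}$ in $H^{B_1}/(H^{B_1})'$, and so has order at most $|H^{B_1}/(H^{B_1})'|$. The point of the transitivity of $G$ on the blocks is that it lets one conjugate the components of each $\kappa\in K$ from one block to another, manufacturing enough elements of $K\cap G'$ to collapse the whole contribution onto a single block.

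The step I expect to be the main obstacle is exactly this lemma. The difficulty is that $K$ is only a \emph{subdirect} subgroup of $\prod_i K^{B_i}$, so $K^{\mathrm{ab}}$ can be strictly larger than $\bigoplus_i(K^{B_i})^{\mathrm{ab}}$: fibre-product (``diagonal'') elements of $K\cap\prod_i(K^{B_i})'$ need not lie in $K'$, and these extra contributions are governed by the homology of the common quotients through which $K$ is glued. One must therefore show that, although $A=K^{\mathrm{ab}}$ may exceed the sum of the local abelianisations, this excess does not survive passage to the $\bar G$-coinvariants, so that the clean identity available for a full direct product persists for the subdirect one. Making this precise is the technical heart of the argument; it is the abelianisation strengthening of the composition-factor statement recorded in Theorem~7.6 of~\cite{GMP}, whose method I would adapt to track the abelian part of $K$ block by block rather than merely counting its abelian composition factors.
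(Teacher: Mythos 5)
Your overall skeleton (induct on $n$, primitive case by Aschbacher--Guralnick, imprimitive case via a block system) is sound, but the ``central lemma'' on which everything rests, namely $|KG'/G'|\le |H^{B_1}/(H^{B_1})'|$ for a maximal block system, is \emph{false}, and the failure is exactly at the subdirect-product point you yourself flagged as the technical heart. The quickest way to see it cannot be true: if it held for every transitive group and every maximal block system, then your own induction (with Aschbacher--Guralnick in the primitive case) would give the polynomial bound $|G/G'|\le n$ for \emph{every} transitive group of degree $n$ --- primitive: $|G_{\mathrm{ab}}|\le n$; imprimitive: $|G_{\mathrm{ab}}|=|\bar G_{\mathrm{ab}}|\cdot|KG'/G'|\le c\cdot|H^{B_1}_{\mathrm{ab}}|\le c\,b=n$, applying the inductive hypothesis to $H^{B_1}$ of degree $b<n$. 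This contradicts the known transitive families with $|G/G'|$ asymptotic to $\exp(bn/\sqrt{\log_2 n})$ cited in the introduction (see \cite{AG,PS}). A concrete counterexample to the lemma: take $T\le \Sym(\Delta)$ primitive of degree $c$ and a nonzero $T$-submodule $M\le \mathbb{F}_2^\Delta$ of the permutation module; then $G:=M\rtimes T\le C_2\,\mathrm{wr}_\Delta\, T$ is transitive of degree $n=2c$, the blocks of size $2$ form a maximal system, $K=M$, and $H^{B_1}=\Sym(2)$, so the right-hand side of your lemma is $2$; but $G'=[M,T]T'$ gives $|KG'/G'|=|M/[M,T]|=|M_T|$, the coinvariants of $M$, and for suitable $T$ and $M$ the dimension of $M_T$ grows like $c/\sqrt{\log c}$ --- these are precisely the extremal examples. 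The specific error in your coinvariants argument is a right-exactness issue: the collapse $(W\uparrow_{\bar H}^{\bar G})_{\bar G}\cong W_{\bar H}$ is valid for the \emph{full} induced module, but $K^{\mathrm{ab}}$ (here $M$ itself) is only a \emph{submodule} of $\bigoplus_i (K^{B_i})^{\mathrm{ab}}$, and coinvariants of a submodule are not controlled by coinvariants of the ambient module: in the example above $(\mathbb{F}_2^\Delta)_T$ is one-dimensional while $M_T$ can be huge. So the ``excess'' you hoped dies in coinvariants is exactly what survives, and it is the entire source of the phenomenon the theorem quantifies.

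This also explains why the paper's proof is structured \emph{dually} to yours and what deep input you are missing. The paper chooses a \emph{minimal} block $\Lambda$, so that the group $R$ induced on a block is primitive (its abelian composition contribution $a(R)$ is tamed by Pyber's bound, Lemmas~\ref{l:LEMMA1} and~\ref{l:LEMMA2}), while the \emph{top} group $\pi(G)$ is transitive of degree $n/r$ and is handled by induction; the kernel-side contribution per level is then bounded, not by one block's worth, but by $p_0^{a_{p_0}b'|\Delta|/\sqrt{\log|\Delta|}}$ via Lemma~\ref{l:LEMMA}, the Lucchini--Menegazzo--Morigi theorem that a $G$-submodule $M$ of an induced module satisfies $d_G(M)\le ab'n/\sqrt{\log n}$. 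That lemma is a genuinely deep result (sharp up to the constant) and is irreplaceable here: it is exactly the correct, non-formal substitute for your false constant-size bound on the coinvariants of a submodule of an induced module, and it is why the true bound is $4^{n/\sqrt{\log_2 n}}$ rather than anything polynomial. Your maximal-blocks decomposition with Aschbacher--Guralnick applied on top cannot be repaired to avoid this input, since the kernel term genuinely attains size $\exp(\Theta(|\Delta|/\sqrt{\log|\Delta|}))$.
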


The constant $4$ in Theorem~\ref{thrm:main} should not be taken too seriously, but it seems remarkably hard to pin down the exact constant.
The choice of the constant $4$ in our work is a compromise:
it makes the statement of Theorem~\ref{thrm:main} explicit and valid for every $n\ge 2$. \\

\section{Preliminaries}\label{preliminaries}
Unless otherwise explicitly stated, all the logarithms are to base $2$.
Given a field $\mathbb{F}$, a  group $G$, a subgroup $H$ of $G$ and an $\mathbb{F}H$-module $W$ (or simply $H$-module), we denote by $W\uparrow_{H}^G$ the induced $G$-module of $W$ from $H$ to $G$, that is, $W\uparrow_H^G:=W\otimes_{\mathbb{F}H}\mathbb{F}G$.  Moreover, given a $G$-module $M$, we denote by $d_G(M)$ the minimal number of generators of $M$ as a $G$-module.
We are ready to report a fundamental result from~\cite{LMM}.

\begin{lemma}\emph{{(See~\cite[Lemma~4]{LMM})}}\label{l:LEMMA}
There is a universal constant $b'$ such that whenever $H$ is a subgroup of index $n\ge 2$ in a finite group $G$,
$\mathbb{F}$ is a field, $V$ is an $H$-module of dimension $a$ over $\mathbb{F}$
and $M$ is a $G$-submodule of the induced module $V\uparrow_H^G$, then
$$d_G(M) \> \le \> \frac{ab'n}{\sqrt{\log n}} . $$ 
\end{lemma}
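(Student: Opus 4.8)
The plan is to reduce the computation of $d_G(M)$ to a question about the multiplicities of simple modules in the head of $M$, and then to locate the saving $\sqrt{\log n}$ in the graded (Loewy-type) structure of the induced module $V\uparrow_H^G$ read off along a chain of subgroups from $H$ to $G$. First I would invoke the standard description of the generation number of a finite-dimensional module over a group algebra,
$$ d_G(M) \> = \> \max_S\left\lceil\frac{\dim_{E_S}\mathrm{Hom}_{\mathbb{F}G}(M,S)}{\dim_{E_S}S}\right\rceil, $$
where $S$ ranges over the simple $\mathbb{F}G$-modules and $E_S=\mathrm{End}_{\mathbb{F}G}(S)$. Thus it is enough to bound, for each simple $S$, the multiplicity $m_S$ of $S$ in the head of $M$. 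Since $M$ is a submodule of $V\uparrow_H^G$, I would bound $m_S$ by the composition multiplicity of $S$ in $V\uparrow_H^G$, which by Frobenius reciprocity is controlled by $a=\dim_\mathbb{F}V$ and by the constituents of $S\downarrow_H$. At this stage the trivial estimate $d_G(M)\le\dim_\mathbb{F}M\le an$ is immediate, and the entire problem is to improve the $n$ to $n/\sqrt{\log n}$.

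To produce that improvement I would refine $H\le G$ to a maximal chain $H=H_0<H_1<\cdots<H_k=G$, so that $V\uparrow_H^G$ is built one level at a time by iterated induction and, since every step has index at least $2$ while the indices multiply to $n$, the length satisfies $2^k\le n$, that is $k\le\log_2 n$. Climbing the chain, each induction step multiplies dimensions and, through Mackey's theorem and Clifford theory, redistributes the multiplicity of a fixed simple module $S$ among the levels. Tracking how this multiplicity can accumulate across the $k$ steps, I expect it to be governed by a single layer of a natural grading on $V\uparrow_H^G$, whose layer dimensions are (multi)nomial coefficients summing to at most $an$ over the $\le\log_2 n$ levels, so that the bottleneck multiplicity is the largest layer, a central coefficient of the shape $a\binom{k}{\lfloor k/2\rfloor}$.

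The argument then closes with an optimization. A single maximal step of large index $q$ is a primitive action of degree $q$, and there I would invoke the known strong bounds for primitive groups — in particular the Aschbacher–Guralnick bound that a primitive group of degree $q$ has abelian quotient of order at most $q$ — to keep such a step cheap, while many steps of small index are instead controlled by the binomial mechanism above via Stirling's estimate $\binom{k}{\lfloor k/2\rfloor}\le c\,2^k/\sqrt{k}$. Balancing these, and using $2^k\le n$ together with the monotonicity of the central coefficient in $k$, the maximum over all chains is attained for the chain consisting entirely of index-$2$ steps (so $k\approx\log_2 n$), and it is of order $an/\sqrt{\log_2 n}$; this fixes a universal constant $b'$ good for every $n\ge 2$.

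The main obstacle is precisely the middle step: controlling, uniformly over arbitrary fields $\mathbb{F}$ and arbitrary submodules $M$, exactly how the head multiplicity of a simple module can accumulate under each induction, and proving that this accumulation is genuinely capped by the central binomial coefficient rather than by the far larger product of all the layer sizes. Establishing this per-level Clifford-theoretic control, so that the binomial maximum is the honest bound, is where essentially all the difficulty lies; by comparison the reduction to the generation formula and the final Stirling optimization are routine.
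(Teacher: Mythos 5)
You should first note that the paper does not prove this lemma at all: it is quoted verbatim from \cite[Lemma~4]{LMM}, with Remark~\ref{remark} invoking Tracey's quantitative refinement to fix $b'$. So your proposal must stand on its own, and it does not: your opening reduction already forfeits the $\sqrt{\log n}$ saving irrecoverably. After passing from the head multiplicity $\dim_{E_S}\mathrm{Hom}_{\mathbb{F}G}(M,S)$ to the \emph{composition} multiplicity of $S$ in the ambient module $V\uparrow_H^G$, the best bound obtainable is the trivial one. Concretely, take $G=C_2^k$ acting regularly, $H=1$, $n=2^k$, $\mathbb{F}=\mathbb{F}_2$ and $V$ the trivial one-dimensional module: the only simple $\mathbb{F}_2G$-module is the trivial module $S$, its composition multiplicity in $\mathbb{F}_2[G]$ is exactly $n$, and $\dim_{E_S}S=1$, so your formula yields only $d_G(M)\le n$. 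This is no accident but the extremal example for the lemma itself: identifying $\mathbb{F}_2[C_2^k]$ with a truncated polynomial algebra, the submodule $M=\mathrm{rad}^{\lfloor k/2\rfloor}(\mathbb{F}_2[G])$ has head of dimension $\binom{k}{\lfloor k/2\rfloor}\approx n/\sqrt{\log n}$. So any correct argument must bound the head of the submodule $M$ directly (through radical-type filtrations of $M$, or a per-step induction), because the quantity you propose to bound instead is genuinely of size $n$.

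Your second half --- refine $H\le G$ to a maximal chain of length $k\le\log_2 n$, control head multiplicities step by step, and close with $\binom{k}{\lfloor k/2\rfloor}\le c\,2^k/\sqrt{k}$ --- is indeed the mechanism behind the actual proofs in the literature: Tracey's function $E(n,p)$ in \cite[Section~4]{Tracey} is built from precisely such central binomial coefficients maximized over factorizations of $n$. But as written this is a programme, not a proof: the assertion that the accumulated multiplicity ``is governed by a single layer of a natural grading whose layer dimensions are multinomial coefficients'' is exactly the theorem to be proved, and you concede as much in your final paragraph. Note moreover that no such grading exists in general --- when $|G|$ is invertible in $\mathbb{F}$ the module $V\uparrow_H^G$ is semisimple, a single ``layer'' of dimension $an$, and that case must instead be dispatched by Frobenius reciprocity, which caps the head multiplicity by roughly $a$; the binomial structure is a feature of the modular ($p$-group-like) situation only. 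Finally, your appeal to Aschbacher--Guralnick is a non sequitur: their theorem bounds $|P/P'|$ for a primitive permutation group $P$ and gives no control whatsoever on $\mathrm{Hom}_{\mathbb{F}G}(M,S)$ or on heads of submodules of induced modules; large-index maximal steps are handled in \cite{LMM} and \cite{Tracey} by the per-step module lemma combined with the behaviour of $q\mapsto q/\sqrt{\log q}$, not by any abelian-quotient bound. In summary: you have identified the correct extremal picture and the correct global strategy, but the per-step control of head multiplicities --- where, as you yourself say, essentially all the difficulty lies --- is missing, and one of the reductions you do commit to provably cannot work.
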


\begin{remark}\label{remark}
{\rm Gareth Tracey, in his monumental work~\cite{Tracey} on minimal sets of generators of transitive groups, has refined Lemma~\ref{l:LEMMA} in various directions. For instance,~\cite[Section~4.3]{Tracey} gives a more quantitative form of Lemma~\ref{l:LEMMA}. Indeed, using the notation in Lemma~\ref{l:LEMMA}, from~\cite[Corollary~$4.27$~(iii)]{Tracey}, we deduce 
\[
d_G(M) \le aE(n,p)\le \begin{cases}
an\frac{2}{c'\log n}&\textrm{when }2\le n\le 1260,\\
an\frac{2}{\sqrt{\pi\log n}}&\textrm{when }n>1261,
\end{cases}
\]
where $c':=0.552282$, $p$ is the characteristic of $M$ and $E(n,p)$ is explicitly defined in~\cite[Section~4]{Tracey}. In particular, we immediately see that in Lemma~\ref{l:LEMMA} we may take $b':=2/\sqrt{\pi}$ whenever $n>1261$. With the help of a computer, we have implemented the function $E(n,p)$ and we have checked that $E(n,p)\le 2n/\sqrt{\pi\log n}$ also when $n\le 1260$. Therefore in Lemma~\ref{l:LEMMA} we may take $b':=2/\sqrt{\pi}$.
}
\end{remark}

Let $R$ be a finite group.
For each prime number $p$, let $a_p(R)$ be the number of abelian composition factors of $R$ of order $p$,
and let 
$$ a(R) \> := \> \sum_{p\,\mathrm{prime}}a_p(R)\log p . $$
We now report a useful result of Pyber.

\begin{lemma}\emph{{(See~\cite[Theorem~$2.10$]{Pyber})}}\label{l:LEMMA1}
Let  $c_0:=\log_9(48\cdot 24^{1/3})$.
The product of the orders of the abelian composition factors of a primitive permutation group of degree $r$ is at most $24^{-1/3}r^{1+c_0}$.
\end{lemma}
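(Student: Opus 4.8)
The plan is to prove the statement by strong induction on the degree $r$, exploiting that the quantity to be bounded, namely $A(G) := \prod_{p} p^{a_p(G)} = 2^{a(G)}$, is multiplicative across normal subgroups: if $N \lhd G$ then $A(G) = A(N)\,A(G/N)$, since every composition factor of $G$ is a composition factor of exactly one of $N$ and $G/N$. Writing $N = \mathrm{Soc}(G) = T^k$ for the socle (a direct power of a simple group $T$), the strategy is to split into O'Nan--Scott types and bound $A(\mathrm{Soc}(G))$ and $A(G/\mathrm{Soc}(G))$ separately. I note at the outset that the target constant $24^{-1/3}$ is calibrated so that the extremal group $\mathrm{GL}(2,3)$ of order $48$ acting on $\mathbb{F}_3^2$ (degree $9$) gives exactly $9^{c_0} = 48\cdot 24^{1/3}$; thus the affine case is precisely where the exponent $1+c_0$ is sharp, and the $24^{-1/3}$ factor is chosen so that it telescopes in the inductive (product-action) step below.

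The genuinely delicate case is the affine one. Here $N = C_p^d$ is elementary abelian, $r = p^d$, and $G = N \rtimes H$ with $H \le \mathrm{GL}(d,p)$ completely reducible; then $A(N) = p^d = r$, so it suffices to prove $A(H) \le 24^{-1/3} r^{c_0} = 24^{-1/3} p^{c_0 d}$. I would establish this by a secondary induction on $d$: if $H$ preserves an imprimitivity or tensor decomposition of the underlying module, then $H$ embeds in a wreath or tensor product of linear groups of smaller dimension and the bound follows inductively, with the $24^{-1/3}$ factor telescoping against the symmetric-group estimate $A(S_m) \le 24^{(m-1)/3}$; if instead $H$ acts as a primitive linear group, then by the Aschbacher/Clifford-theoretic description its generalized Fitting subgroup is a central product of a symplectic-type group with quasisimple components, and $A(H)$ is bounded directly. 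The abelian composition factors arise only from the solvable part, so the sharp input is the P\'alfy--Wolf bound on solvable completely reducible linear groups, which is exactly what produces the constant $c_0$ through the extremal $\mathrm{GL}(2,3)$ example; the nonabelian composition factors of any quasisimple components contribute nothing to $A$, and Schreier's theorem keeps the corresponding outer-automorphism contributions negligible.

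For the non-affine types $T$ is nonabelian simple, so $A(\mathrm{Soc}(G)) = 1$ and the entire burden falls on $G/\mathrm{Soc}(G) \le \mathrm{Out}(T) \wr S_k$. Using Schreier's conjecture (a consequence of the classification) the groups $\mathrm{Out}(T)$ are solvable with $|\mathrm{Out}(T)|$ polylogarithmic in $|T|$, while $A(S_k) \le 24^{(k-1)/3}$. Combined with the degree lower bounds furnished by each O'Nan--Scott type --- $r \ge |T|^{k-1}$ in diagonal type, $r = s^m$ with a smaller-degree primitive constituent of degree $s$ in product type (where I again invoke the induction and let $24^{-1/3}$ telescope), and $r$ at least the minimal faithful degree of $T$ in the almost simple case --- these estimates are comfortably absorbed by $r^{1+c_0}$, since $c_0 \approx 2.24$ leaves ample room.

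I expect the \textbf{main obstacle} to be the affine case, and specifically the extraction of the exact exponent $c_0$: everything hinges on the sharp P\'alfy--Wolf analysis of solvable linear groups, whose extremal configurations are iterated wreath products built from $\mathrm{GL}(2,3)$ on $\mathbb{F}_3^2$, and on checking that the nonabelian composition factors appearing in the general completely reducible (non-solvable) case never worsen the solvable bound. The remaining difficulty is essentially bookkeeping: verifying that the finitely many small-degree, small-rank simple groups, where $|\mathrm{Out}(T)|$ is largest relative to the minimal faithful degree, still satisfy the inequality --- a finite check rather than a conceptual hurdle.
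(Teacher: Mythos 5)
You should know at the outset that the paper contains no proof of Lemma~\ref{l:LEMMA1} at all: it is imported verbatim from Pyber~\cite{Pyber}, so your attempt can only be compared with Pyber's original argument. Measured against that, your outline reconstructs the true architecture well: the multiplicativity of $A(G):=2^{a(G)}$ over normal subgroups, the O'Nan--Scott split, the recognition that the affine case is the sharp one with extremal point $\mathbb{F}_3^2\rtimes\mathrm{GL}(2,3)$ of degree $9$ (indeed $24^{-1/3}\cdot 9^{1+c_0}=9\cdot 48=432$, the order of that solvable group), the P\'alfy--Wolf analysis of completely reducible linear groups as the source of $c_0$, and the role of the normalization $24^{-1/3}$ as exactly the factor that telescopes in the imprimitive and product-action steps against the Dixon-type bound $24^{(m-1)/3}$. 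As a road map this is accurate.

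As a proof, however, there is a genuine gap, and it recurs at every step of the form ``$H$ embeds in a wreath (or tensor) product, hence the bound follows'': the invariant $A$ is \emph{not} monotone under passage to subgroups. For instance $A(A_5)=1$ while its subgroup $A_4$ has $A(A_4)=12$. Consequently, from $G/\mathrm{Soc}(G)\le \mathrm{Out}(T)\wr S_k$ you may not bound $A(G/\mathrm{Soc}(G))$ by the $A$-value of the ambient wreath product, and your estimate ``$A(S_m)\le 24^{(m-1)/3}$'' --- trivially true for the full symmetric group, whose abelian composition factors for $m\ge 5$ amount to a single $C_2$ --- is not the statement you actually need. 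What is needed is Pyber's generalization of Dixon's theorem: \emph{every} subgroup $H\le S_m$ satisfies $A(H)\le 24^{(m-1)/3}$; and that result is itself proved by an induction (through transitive and primitive constituents) that interlocks with the very primitive bound you are proving, so the double induction must be set up explicitly to avoid circularity. The same non-monotonicity forces the imprimitive linear case to run through a Clifford-theoretic induction on an arbitrary completely reducible subgroup rather than a bare embedding into $\mathrm{GL}(d/m,p)\wr S_m$. Finally, the clause ``$A(H)$ is bounded directly'' in the primitive linear case conceals the technical heart of extending P\'alfy--Wolf beyond solvable groups: quasisimple components do contribute to $A$ through their centres, and one must control the symplectic-type part together with the $A$-value of the subgroup of the normalizer quotient it induces. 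You correctly flag this as the main obstacle, but flagging it is not supplying it; as written, the proposal is a faithful reconstruction of the strategy of~\cite{Pyber} with its two hardest ingredients asserted rather than proved.
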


From Lemma~\ref{l:LEMMA1}, we deduce the following.

\begin{lemma}\label{l:LEMMA2}
Let $R$ be a primitive group of degree $r$, let $c_0$ be the constant in Lemma~$\ref{l:LEMMA1}$.
Then
$$ a(R) \> \le \> (1+c_0)\log r-\log(24)/3. $$
\end{lemma}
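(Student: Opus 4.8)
The plan is to observe that $a(R)$ is nothing other than the base-$2$ logarithm of the product of the orders of the abelian composition factors of $R$, and then to apply Lemma~\ref{l:LEMMA1} directly.

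First I would unwind the definition. The abelian composition factors of a finite group are precisely its composition factors of prime order, and each such factor of order $p$ contributes exactly one factor $p$ to the product of the orders of the abelian composition factors; since there are $a_p(R)$ of them for each prime $p$, this product equals $\prod_{p\,\mathrm{prime}}p^{a_p(R)}$. Taking the base-$2$ logarithm and using that the logarithm of a product is the sum of the logarithms yields
\[
\log\Bigl(\prod_{p\,\mathrm{prime}}p^{a_p(R)}\Bigr)=\sum_{p\,\mathrm{prime}}a_p(R)\log p=a(R),
\]
so $a(R)$ is exactly the logarithm of the quantity bounded in Lemma~\ref{l:LEMMA1}.

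Next I would invoke Lemma~\ref{l:LEMMA1}: since $R$ is primitive of degree $r$, the product above is at most $24^{-1/3}r^{1+c_0}$. Taking logarithms of both sides and using monotonicity of $\log$ then gives
\[
a(R)\le\log\bigl(24^{-1/3}r^{1+c_0}\bigr)=(1+c_0)\log r-\tfrac{1}{3}\log 24,
\]
which is the claimed inequality.

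There is no genuine obstacle here: the entire content lies in Pyber's Lemma~\ref{l:LEMMA1}, and the present lemma is simply its logarithmic reformulation. The only point requiring a moment's care is the identification of the abelian composition factors with the composition factors of prime order, so that the sum defining $a(R)$ matches the logarithm of the product appearing in Lemma~\ref{l:LEMMA1}.
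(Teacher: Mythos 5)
Your proof is correct and is essentially identical to the paper's: both identify $a(R)$ as the base-$2$ logarithm of the product of the orders of the abelian composition factors (the paper writes this product as $2^{a(R)}$), apply Lemma~\ref{l:LEMMA1}, and take logarithms. Nothing is missing.
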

\begin{proof}
By definition, the product of the orders of the abelian composition factors of $R$ is $$\prod_{p\,\mathrm{ prime}}p^{a_p(R)}=\prod_{p\,\mathrm{prime}}2^{a_p(R)\log p}=2^{a(R)}.$$ From Lemma~\ref{l:LEMMA1}, this number is at most $24^{-1/3}r^{1+c_0}$.
The proof follows by taking logarithms.
\end{proof}

Notice that Lemma \ref{l:LEMMA1} is often used in order to bound the composition length of a primitive permutation groups. 
A more precise bound on this composition length has been recently proved by  Glasby, Praeger, Rosa and Verret \cite[Theorem 1.3]{gprv}. However this stronger bound is not sufficient for our application, which requires information not only on the number of the composition factors but also on their order.

Finally, given a finite group $G$, we denote by $G_{\mathrm{ab}}$ the quotient group $G/G'$.\\

\section{Proof of Theorem~$\ref{thrm:main}$}

Let $R$ be a finite group, let $\Delta$ be a finite set and let  $W:=R\,\mathrm{wr}_\Delta\Sym(\Delta)$ be the wreath product of $R$ via $\Sym(\Delta)$. We denote by $$\pi:W\to \Sym(\Delta)$$
the projection of $W$ over the top group $\Sym(\Delta)$. Let $\prod_{\delta\in \Delta}R_\delta$ be the base subgroup of $W$ and, for each $\delta\in \Delta$, consider $W_\delta:=\nor W{R_\delta}$. As $$W_\delta=R_\delta\times R\,\mathrm{wr}\Sym(\Delta\setminus\{\delta\}),$$
we may consider the projection $\rho_\delta:W_\delta\to R_\delta$.
Using this notation, we adapt the proof of~\cite[Lemma~2.5]{L} to prove the following.

\begin{lemma}\label{L:lemma}
Let $R$ be a  finite group, let $\Delta$ be a set of cardinality at least $2$
and let $G$ be a subgroup of the wreath product $R\,\mathrm{wr}_\Delta\Sym(\Delta)$ with the properties
\begin{enumerate}
\item\label{enu0}$\pi(G)$ is transitive on $\Delta$,
\item\label{enu1}$\rho_\delta(\nor G{R_\delta})=R_\delta$, for every $\delta\in \Delta.$
\end{enumerate}
Then 
$$ \log|G_{\mathrm{ab}}| \> \le \> \frac{a(R)b'|\Delta|}{\sqrt{\log|\Delta|}}+\log|(\pi(G))_{\mathrm{ab}}|, $$
where $b'$ is the absolute constant appearing in Lemma~$\ref{l:LEMMA}$, and $a(R)$ is defined in Section~$\ref{preliminaries}$.
\end{lemma}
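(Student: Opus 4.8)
The plan is to split the abelianization of $G$ into a contribution from the top group $\pi(G)$ and a contribution from the base, and then to control the latter prime by prime by realizing it as a submodule of an induced module and invoking Lemma~\ref{l:LEMMA}. Write $B:=\prod_{\delta\in\Delta}R_\delta$ for the base subgroup and $K:=G\cap B=\ker(\pi|_G)$, so that $\pi$ induces a short exact sequence $1\to K\to G\to \pi(G)\to 1$ with $K\lhd G$. Abelianizing is right exact, and the induced map $G_{\mathrm{ab}}\to(\pi(G))_{\mathrm{ab}}$ is surjective with kernel the image $M:=KG'/G'\cong K/(K\cap G')$ of $K$ in $G_{\mathrm{ab}}$. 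Hence $\log|G_{\mathrm{ab}}|=\log|(\pi(G))_{\mathrm{ab}}|+\log|M|$, and it remains to bound $\log|M|$ by $a(R)b'|\Delta|/\sqrt{\log|\Delta|}$. Since $K\cap G'\supseteq [K,G]K'$, the abelian group $M$ is a quotient of the group of $\pi(G)$-coinvariants $K/[K,G]K'$ of $K_{\mathrm{ab}}$; in particular $\log|M|\le \log\big|K/[K,G]K'\big|$.

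First I would reduce to a single prime. As $M$ is abelian, $\log|M|=\sum_p m_p\log p$, where $p^{m_p}$ is the order of the Sylow $p$-subgroup of $M$, and it suffices to prove
\[
m_p \> \le \> \frac{a_p(R)\,b'\,|\Delta|}{\sqrt{\log|\Delta|}}
\]
for every prime $p$, since summing against $\log p$ then yields the claimed bound with $a(R)=\sum_p a_p(R)\log p$. To estimate $m_p$ I would fix a $G$-composition series of $K$ refining a $\pi(G)$-invariant series of $B$; its factors are chief factors, each isomorphic as a section to a composition factor of $R$. Let $L_1,\dots,L_t$ be those factors that are abelian of order $p$, regarded as $\mathbb{F}_p\pi(G)$-modules. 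Passing to $M$ kills the nonabelian factors and replaces each $L_i$ by a $\pi(G)$-trivial quotient, that is, a quotient of its coinvariants $L_i/[L_i,\pi(G)]$; by right exactness of coinvariants along the filtration, $m_p\le\sum_{i=1}^t\dim_{\mathbb{F}_p}\big(L_i/[L_i,\pi(G)]\big)$. Setting $L:=\bigoplus_i L_i$, coinvariants commute with direct sums, so the right-hand side equals $\dim_{\mathbb{F}_p}\big(L/[L,\pi(G)]\big)$, which is at most the minimal number of generators $d_{\pi(G)}(L)$ of $L$ as an $\mathbb{F}_p\pi(G)$-module, the trivial quotient being a quotient of the head of $L$.

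Next I would exhibit $L$ as a submodule of an induced module. By condition~\ref{enu0} the group $\pi(G)$ permutes the coordinates of $B$ transitively, so after fixing $\delta_0\in\Delta$ and putting $H:=\pi(G)_{\delta_0}$ (the stabiliser of $\delta_0$, of index $|\Delta|$), each chief factor of $B$ lying over a given composition factor of $R$ is exactly the module induced from $H$ to $\pi(G)$ of the corresponding section of $R_{\delta_0}$. Condition~\ref{enu1}, namely $\rho_\delta(\nor G{R_\delta})=R_\delta$ for every $\delta$, is what guarantees that these local sections at $\delta_0$ are the full abelian composition factors of $R$ of order $p$, so that the fibre $V:=\bigoplus_i V_i$ (with $V_i$ the $H$-section underlying $L_i$) is an $H$-module of dimension $\sum_i\dim_{\mathbb{F}_p}V_i=a_p(R)$ and $L$ embeds as an $\mathbb{F}_p\pi(G)$-submodule of $V\uparrow_H^{\pi(G)}$. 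Applying Lemma~\ref{l:LEMMA} with $\mathbb{F}=\mathbb{F}_p$, $a=a_p(R)$ and $n=|\Delta|$ gives $d_{\pi(G)}(L)\le a_p(R)b'|\Delta|/\sqrt{\log|\Delta|}$, hence the displayed bound on $m_p$; summing over $p$ completes the argument.

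The main obstacle I anticipate is the identification in the last paragraph: showing precisely that the abelian composition factors of $G$ of order $p$ inside $K$ assemble into a submodule of $V\uparrow_H^{\pi(G)}$ with fibre dimension exactly $a_p(R)$. This is where both hypotheses are essential — transitivity (condition~\ref{enu0}) makes the base chief factors genuinely induced from the point stabiliser of index $|\Delta|$, while the surjectivity (condition~\ref{enu1}) ensures that no abelian composition factor of $R$ is lost at a coordinate, pinning the fibre dimension to $a_p(R)$ rather than to something smaller. Once this induced structure is in place, Lemma~\ref{l:LEMMA} supplies the crucial subexponential saving $1/\sqrt{\log|\Delta|}$.
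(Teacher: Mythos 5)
Your overall architecture---splitting $\log|G_{\mathrm{ab}}|$ into $\log|(\pi(G))_{\mathrm{ab}}|$ plus the image of $K:=G\cap B$ (where $B:=\prod_{\delta\in\Delta}R_\delta$) in $G_{\mathrm{ab}}$, and bounding the latter prime by prime via coinvariants and Lemma~\ref{l:LEMMA}---is reasonable and genuinely different from the paper, which argues by induction on $|R|$: the paper peels off one minimal normal subgroup $L$ of $R$ at a time, sets $M:=B_L\cap G$, and in the abelian case applies Lemma~\ref{l:LEMMA} to $M$, which is a \emph{genuine} $G$-submodule of the induced module $L_{\delta_0}\uparrow_{K}^{G}$ with $K=\nor G{L_{\delta_0}}$ of index $|\Delta|$. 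But the step you yourself flag as the anticipated obstacle is, as written, a real gap, in fact two. First, the abelian $p$-chief factors $L_i$ of $G$ below $K$ are elementary abelian of arbitrary rank (not of order $p$, and not isomorphic to single composition factors of $R$), and they are \emph{subquotients}, not submodules, of the layer modules $(R_j/R_{j-1})^\Delta$; a direct sum of subquotients of a filtered module need not embed anywhere in the ambient induced module (think of a uniserial layer), so the asserted embedding $\bigoplus_i L_i\hookrightarrow V\uparrow_H^{\pi(G)}$ is unjustified and false in general. Moreover these chief factors are $\mathbb{F}_pG$-modules whose action need not factor through $\pi(G)$, since elements of $K$ act on the layers of $B$ by inner automorphisms of $R$ (take $R=S_3$ acting by inversion on its $C_3$-layer); so the induction of modules must be performed over $G$ from an index-$|\Delta|$ subgroup such as $\nor G{L_{\delta_0}}$, exactly as the paper does, not over $\pi(G)$ from $H$.

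Second, and more seriously, your reading of hypothesis~\eqref{enu1} is backwards: a fibre of dimension \emph{smaller} than $a_p(R)$ would only help an upper bound, so as written your argument never uses \eqref{enu1} in a load-bearing way---yet the lemma is false without it. The true danger, which the fibre count never addresses, is that abelian chief factors of $G$ may arise \emph{inside the non-abelian layers} of $B$, where they are invisible to $a_p(R)$. Concretely, take $R=A_5$, let $V$ be a Klein four-subgroup of $A_5$, let $T\le\Sym(\Delta)$ be perfect and transitive, and set $G:=V^\Delta\rtimes T$: then \eqref{enu0} holds, $a(R)=0$ and $(\pi(G))_{\mathrm{ab}}=1$, yet $|G_{\mathrm{ab}}|=|V^\Delta/[V^\Delta,T]|=4$. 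A correct proof must therefore use \eqref{enu1} to establish the dichotomy at the heart of the paper's non-abelian case: by \eqref{enu1} the projection $\rho_\delta$ of the relevant intersection is normal in $R_\delta$, hence trivial or the full chief factor by minimality, uniformly over $\Delta$ by \eqref{enu0}; in the nontrivial case the intersection is subdirect in a product of non-abelian simple groups, hence perfect, and contributes nothing to the abelianization. Once this is added, your plan can be repaired without refining the abelian layers to chief factors at all: apply Lemma~\ref{l:LEMMA} directly to the submodule $(K\cap R_j^\Delta)/(K\cap R_{j-1}^\Delta)$ of the induced layer module with fibre dimension $c_j$, bound its $G$-coinvariants by $d_G$, and sum the $c_j$ over the abelian $p$-layers of a chief series of $R$ to get $a_p(R)$. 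The repaired argument is then, in effect, the paper's induction on $|R|$ unrolled along a chief series of $R$.
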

\begin{proof}
We argue by induction on the order of $R$. When $|R|=1$, there is nothing to prove because $\pi(G)\cong G$ and hence $\log|G_{\mathrm{ab}}|=\log|(\pi(G))_{\mathrm{ab}}|$. Suppose then $R\ne 1$.
We write
\begin{align}\label{eq:22}
|G_{\mathrm{ab}}|=|G:G'M||G'M:G'|=|(G/M)_{\mathrm{ab}}||M:M\cap G'| .
\end{align}

 Let $L$ be a minimal normal subgroup of $R$. Fix $\delta_0\in \Delta$. We identify $L$ with a normal subgroup $L_{\delta_0}$ of the direct factor $R_{\delta_0}$ of the base group $\prod_{\delta\in \Delta}R_\delta$ of $W$. Let $B_L$ be the direct product of the distinct $G$-conjugates of $L_{\delta_0}$ and consider $M:=B_L\cap G$. We have $M\unlhd G$ and
$$\frac{G}{M}=\frac{G}{B_L\cap G}\cong \frac{GB_L}{B_L}.$$
Now, from~\eqref{enu0}, we deduce that $GB_L/B_L$ is isomorphic to a subgroup of the wreath product $$(R/L)\mathrm{wr}_\Delta\Sym(\Delta).$$

Therefore, by induction,
\begin{align}\label{eq:11}
\log|(G/M)_{\mathrm{ab}}|\le 
\frac{a(R/L)b'|\Delta|}{\sqrt{\log|\Delta|}}+
\log|(\pi(G))_{\mathrm{ab}}|.
\end{align}

We now distinguish two cases. 
\smallskip

\noindent\textsc{$L$ is non-abelian:}

\smallskip

\noindent Since $M\unlhd W_{\delta_0}\cap G$, we deduce 
$\rho_{\delta_0}(M)\unlhd \rho_{\delta_0}(W_{\delta_0}\cap G)$. From~\eqref{enu1}, we have $\rho_{\delta_0}(W_{\delta_0}\cap G)=\rho_{\delta_0}(\nor G{R_{\delta_0}})=R_{\delta_0}$ and hence
$\rho_{\delta_0}(M)\unlhd R_{\delta_0}$. Observe that $\rho_{\delta_0}(M)$ is contained in $L_{\delta_0}$. As $L_{\delta_0}$ is a minimal normal subgroup of $R_{\delta_0}$, we get either $\rho_{\delta_0}(M)=1$ or $\rho_{\delta_0}(M)=L_{\delta_0}$.  From~\eqref{enu0}, $\pi(G)$ is transitive on $\Delta$ and hence either $\rho_\delta(M)=1$ for each $\delta\in \Delta$, or $\rho_\delta(M)=L_\delta$ for each $\delta\in \Delta$.

Suppose $\rho_{\delta_{0}}(M)=1$. As $\rho_\delta(M)=1$ for each $\delta\in \Delta$, we get $M=1$. Now the proof immediately follows from~\eqref{eq:11} because $G/M\cong G$.

Suppose $\rho_{\delta_0}(M)=L_{\delta_0}$. Then $M$ is a subdirect product of $L^\Delta=\prod_{\delta\in \Delta}L_\delta$. As $L$ is a non-abelian minimal normal subgroup of $R$, we deduce that $M$ is a direct product of non-abelian simple groups. Thus $M$ has no abelian composition factor and hence~\eqref{eq:22} gives $|G_{\mathrm{ab}}|=|(G/M)_{\mathrm{ab}}|$. Moreover,  $a(R/L)=a(R)$ and hence, once again, the proof immediately follows from~\eqref{eq:11}.

\smallskip

\noindent\textsc{$L$ is abelian:}

\smallskip

\noindent  As $L$ is a minimal normal subgroup of $R$, it is an elementary abelian $p_0$-group, for some prime number $p_0$. Let $a_{p_0}$ be the composition length of $L$. In particular,
$$a(R)=a(R/L)+a_{p_0}\log {p_0}.$$

The group $B_L$ is abelian and the action of $G$ by conjugation on $B_L$ endows $B_L$ with a natural structure of $G$-module.
From its definition, as $G$-module, $B_L$ is isomorphic to the induced module 
$$L_{\delta_0}\uparrow_{K}^G,$$
where $K:=\nor G{L_{\delta_0}}$. From~\eqref{enu0}, $G$ acts transitively on $\Delta$ and hence $|\Delta|=|G:\nor G {L_{\delta_0}}|=|G:K|$. From Lemma~\ref{l:LEMMA}, we deduce 
$$ d_G(M/(M \cap G')) \le d_G(M)\le \frac{a_{p_0}b'|\Delta|}{\sqrt{\log|\Delta|}} . $$ 
However, as $G$ acts trivially by conjugation on $M/(M \cap G')$,
we get that $d_G(M/(M\cap G'))$ is just the dimension of $M/(M\cap G')$ as a vector space over the prime field $\mathbb{Z}/p_0\mathbb{Z}$.
Therefore
\begin{align}\label{eq:33}
|M:M\cap G'|\le p_0^{ ( a_{p_0}b'|\Delta|/\sqrt{\log|\Delta|} ) }.
\end{align}

From~\eqref{eq:22}, \eqref{eq:11}, and~\eqref{eq:33}, we get 
\begin{align*}
\log|G_{\mathrm{ab}}|\le& 
\log|(G/M)_{\mathrm{ab}}| +\log|M:M\cap G'|\\
\leq&\frac{a(R/L)b'|\Delta|}{\sqrt{\log|\Delta|}}
+
\log|(\pi(G))_{\mathrm{ab}}|
+\log(p_0) \frac{a_{p_0}b'|\Delta|}{\sqrt{\log|\Delta|}} \\
=&(a(R/L)+a_{p_0}\log{p_0})\frac{b'|\Delta|}{\sqrt{\log|\Delta|}}+
\log|(\pi(G))_{\mathrm{ab}}|\\
=&a(R)\frac{b'|\Delta|}{\sqrt{\log|\Delta|}}+
\log|(\pi(G))_{\mathrm{ab}}|.\qedhere
\end{align*}
\end{proof} 
\vspace{0.2cm}

 With Lemma~\ref{L:lemma} in hand, we prove Theorem~\ref{thrm:main} by induction on $n$. 
 
Let $G$ be a transitive permutation group of degree $n\ge 2$.
From the main result of~\cite{KP}, we have $|G_{\mathrm{ab}}|\le 3^{n/3}$.
Now the inequality $3^{n/3}\le 4^{n/\sqrt{\log n}}$ is satisfied for each $n\le 20\, 603$.
In particular, for the rest of the proof, we may suppose that $n\ge 20\, 604$.

Suppose first that $G$ is primitive. In this case, from~\cite{AG}, we have $|G_{\mathrm{ab}}|\le n$ and the inequality $n\le 4^{n/\sqrt{\log n}}$ follows with an easy computation.

Suppose now that $G$ is imprimitive and let $\Omega$ be the domain of $G$. Among all non-trivial blocks of imprimitivity of $G$,
choose one (say $\Lambda$) minimal with respect to the inclusion. Let $G_{\{\Lambda\}}:=\{g\in G\mid \Lambda^g=\Lambda\}$ be the setwise stabilizer of $\Lambda$ in $G$ and let $R\le \Sym(\Lambda)$  be the permutation group induced by $G_{\{\Lambda\}}$ in its action on $\Lambda$. The minimality of $\Lambda$ yields that $R$ acts primitively on $\Lambda$.  

Let $\Delta:=\{\Lambda^g\mid g\in G\}$ be the system of imprimitivity determined by the block $\Lambda$. Then $G$ is a subgroup of the wreath product
$$R\,\mathrm{wr}_\Delta\Sym(\Delta).$$
We now use the notation of Lemma~\ref{L:lemma} for wreath products.
In particular, let $\pi:R\,\mathrm{wr}_\Delta \Sym(\Delta)\to \Sym(\Delta)$ be the projection onto the top group $\Sym(\Delta)$ and, for each $\delta\in \Delta$, let $R_\delta$ be the direct factor of the base group $\prod_{\delta\in \Delta}R_\delta$ corresponding to $\delta$.
From the fact that $G$ acts transitively on $\Omega$ and from the definition of $R$, we get that the two hypotheses~\eqref{enu0} and~\eqref{enu1} are satisfied. Therefore, from Lemma~\ref{L:lemma} itself, we deduce
$$\log|G_{\mathrm{ab}}|\le \frac{a(R)b'|\Delta|}{\sqrt{\log|\Delta|}}+
\log|(\pi(G))_{\mathrm{ab}}|.$$

Set $r:=|\Lambda|$. Thus $|\Delta|=n/r$. From Lemma~\ref{l:LEMMA2} and from induction (as $n/r<n$), we get
\begin{equation}\label{aux}
 \log|G_{\mathrm{ab}}| \le 
 \frac{b' (n/r)}{\sqrt{\log (n/r)}} \left( (1+c_0) \log r - \frac{\log(24)}{3} \right) +2\frac{(n/r)}{\sqrt{\log(n/r)}} . 
\end{equation}
From Remark~\ref{remark}, we see that we may take $b'=2/\sqrt{\pi}$. Now, for $n\ge 20\,604$, a careful calculation shows that the right hand side of~\eqref{aux} is at most $2n/\sqrt{\log n}$ for every divisor $r$ of $n$ with $4<r<n$.

We now discuss the cases $r\in \{2,3,4\}$ separately.
When $r=2$, we have $a(R)=1$ and hence
\begin{equation}\label{aux1}
\log|G_{\mathrm{ab}}|\le \frac{b'(n/2)}{\sqrt{\log(n/2)}}+2\frac{(n/2)}{\sqrt{\log(n/2)}}.
\end{equation}
Now, the right hand side of~\eqref{aux1} is less than $2n/\sqrt{\log n}$ for each $n\ge 20\, 604$.
The computation when $r\in \{3,4\}$ is analogous using $a(R)\le 1+\log(3)$ when $r=3$, and $a(R)\le 3+\log(3)$ when $r=4$.\\

\thebibliography{10}
\bibitem{AG}M.~Aschbacher, R.~M.~Guralnick, On abelian quotients of primitive groups, \textit{Proc. Amer. Math. Soc.} \textbf{107} (1989), 89--95.

\bibitem{EP}D.~Easdown, C.~E.~Praeger, On minimal faithful permutation representations of finite groups,
\textit{Bull. Austral. Math. Soc.} \textbf{38} (1988), 207--220.

\bibitem{gprv} S. P. Glasby, C.~E.~Praeger, K. Rosa, G. Verret, 
Bounding the composition length of primitive permutation groups and completely reducible linear groups, \textit{J. Lond. Math. Soc.} (2) \textbf{98} (2018), no. 3, 557--572.

\bibitem{GMP}R.~M.~Guralnick, A.~Mar\'oti, L.~Pyber,
Normalizers of primitive permutation groups,
\textit{Adv. Math.} \textbf{310} (2017), 1017--1063. 

\bibitem{KP}L.~G.~Kov\'acs, C.~E.~Praeger, Finite permutation groups with large abelian quotients, \textit{Pacific J. Math.} \textbf{136} (1989), 283--292
\bibitem{L}A.~Lucchini,
Enumerating transitive finite permutation groups, \textit{Bull. London Math. Soc.} \textbf{30} (1998),  569--577. 
\bibitem{LMM}A.~Lucchini, F.~Menegazzo, M.~Morigi, Asymptotic results for transitive permutation groups, \textit{Bull. London Math. Soc.} \textbf{32} (2000),  191–195. 
\bibitem{PS}C.~E.~Praeger, C.~Schneider, The contribution of L. G. Kov\'acs to the theory of permutation groups, \textit{J. Aust. Math. Soc.} \textbf{102} (2017), no. 1, 20--33.
\bibitem{Pyber}L.~Pyber, Asymptotic results for permutation groups, Groups and computation (New Brunswick, NJ, 1991), 197--219, DIMACS Ser. Discrete Math. Theoret. Comput. Sci., 11, Amer. Math. Soc., Providence, RI, 1993. 
\bibitem{Tracey}G.~M.~Tracey, Minimal generation of transitive permutation groups, \textit{J. Algebra} \textbf{509} (2018), 40--100.
\end{document}